\documentclass[11pt,leqno,a4paper]{article}

\pagestyle{myheadings}
\makeatletter
\usepackage[reqno]{amsmath}
\usepackage{epsfig,changebar}
\usepackage{graphicx}
\usepackage{shadow}
\usepackage{indentfirst}
\usepackage{amsfonts}
\usepackage{amscd}
\usepackage{amssymb}
\usepackage{amsthm}
\usepackage{amstext}
\usepackage{enumerate}
\usepackage{latexsym,amssymb}

\usepackage{color}
\definecolor{micolor}{rgb}{0.85, 0.5, 0.4}
%-------------
\setlength{\textheight}{8.5in} \setlength{\textwidth}{6.2in}
\setlength{\oddsidemargin}{0.05in} \setlength{\topmargin}{0.1in}
\newtheorem{theorem}{Theorem}[section]
\newtheorem{lemma}[theorem]{Lemma}

\newtheorem{proposition}[theorem]{Proposition}
\theoremstyle{definition}
\newtheorem{definition}[theorem]{Definition}

\theoremstyle{remark}

\numberwithin{equation}{section} \allowdisplaybreaks
%

%
%

%
%% mis comandos %%
%\newcommand{\Real}{\mathds{R}}

%\newcommand{\Zeta}{\mathbb{Z}}

\pagestyle{myheadings}
%\markboth{Two weighted inequalities...}{ Crescimbeni-Hartzstein-Salinas}

\begin{document}

\title{Singular integrals with variable kernels in dyadic settings}
\vskip 0.3 truecm

\author{Hugo Aimar, Raquel Crescimbeni and Luis Nowak$\,$\thanks{This research is partially supported by Consejo Nacional de Investigaciones Cient\'ificas y T\'ecnicas, Universidad Nacional del Litoral and Universidad Nacional del Comahue, Argentina.\newline \indent Keywords and phrases:
Singular integrals, Spaces of homogeneous type, Petermichl's operator, Haar basis.
\newline \indent% 2010 Mathematics Subject Classification: Primary 42B35, Secondary 35J10.
\newline }}
\date{\vspace{-0.5cm}}
\maketitle
%\subjclass[2010]{Primary 42B35, Secondary 35J10}
%\author{Hugo Aimar \fnref{fn1}}
%\address{Departamento de Matem\'{a}tica (FIQ-UNL), IMAL-CONICET, Santa Fe, Argentina}
%\ead{haimar@santafe-conicet.gov.ar}
%\author{Raquel Crescimbeni \fnref{fn2}}
%\address{Dept. de Matem\'atica, Fac. de Econom\'ia y Administraci\'on, UNCo. e IITCI-CONICET,  Neuqu\'en, Argentina}
%\ead{raquel.crescimbeni@comahue-conicet.gob.ar}
%\author{Luis Nowak \fnref{fn2}}
%\address{Dept. de Matem\'atica, Fac. de Econom\'ia y Administraci\'on, UNCo. e IITCI-CONICET,  Neuqu\'en, Argentina}
%\ead{luis.nowak@comahue-conicet.gob.ar}

%\fntext[fn1]
%{This research is partially supported by Instituto de Matem\'atica Aplicada del Litoral (CONICET), Argentina.}
%\fntext[fn2]
%{Supported by Depto. de Matem\'atica, Fac. de Econom\'ia y Administraci\'on, Univ. Nac. del Comahue, Argentina.}

\date{}
%\subjclass[2000]{42C15, 42B20, 28C15 }

%\begin{keyword}
%Haar basis, equivalence of bases, spaces of homogeneous type.
%\end{keyword}

\begin{abstract}
In this paper we explore conditions on variable symbols with respect to Haar systems, defining Calder\'on-Zygmund type operators with respect to the dyadic metrics associated to the Haar bases.We show that Petermichl's dyadic kernel can be seen as a variable kernel singular integral and we extend it to dyadic systems built on spaces of homogeneous type.
\end{abstract}

\section{Introduction}

The seminal work of A.P. Calder\'on and A. Zygmund during the fifties of the last century, regarding singular integrals and their relation to partial differential equations, can be considered the corner stone  of modern Harmonic Analysis, see E. Stein in \cite{St} for historical development of the ideas and their impact in the actual and future research in the area.
Let us point out two aspects of their contributions that will help us at introducing the problems that we consider in this paper. These aspects are contained in the two papers \cite{CZ1952} and \cite{CZ1978}. In \cite{CZ1952} the authors consider convolution type singular integral operators and in \cite{CZ1978} they introduce non-convolution type kernels, also called variable kernels. 

In the Calder\'on-Zygmund singular integral theory in metric and quasi-metric spaces (see \cite{CW}, \cite{MS1},\cite{MS2}, \cite{A} and \cite{DS}), the distinction between convolution and non-convolution kernels does not a priori make sense because convolution is not generally  defined in this setting.
Nevertheless, there is still another way to consider a convolution operator. The idea goes back to the works of  Mikhlin, Giraud and Tricomi (see \cite{M1}, \cite{M2} and the references therein) which, aside from the depth of the analytic tools, it  becomes relevant at generating convolution type filters in machine learning when the analysis is considered on non euclidean data.This way is provided by the spectral analysis of the operators, when it is available. Let us briefly sketch the basic idea in a general framework. Assume that $\{\varphi_k\}$ is an orthonormal basis for the space $L^2(X,\mu)$, where $X$ is a measure space and $\mu$ is a Borel measure. In analogy with the Fourier case we consider convolution type operators, bounded in $L^2(X,\mu)$, as a multiplier operators of the form $$T_\eta f(x)\ =\ \sum_k \eta_k <f,\varphi_k> \varphi_k(x),$$
with $\eta=\{\eta_k\}$ a bounded scalar sequence. Here $<f,g>$ denotes the usual scalar product in $L^2(X,\mu)$. On the other hand, if instead of a sequence $\{\eta_k\}$ we consider in the definition of $T$ a sequence of bounded functions of $x$, $\{\eta_k(x)\}$, i.e. 
$$T f(x)\ =\ \sum_k \eta_k(x) <f,\varphi_k> \varphi_k(x),$$ 
we  say that $T$ is an operator with variable kernel  given at least formally by 
$$K(x,y) \ =\ \sum_k \eta_k(x) \varphi_k(x) \varphi_k(y).$$

In the analysis of unconditionality  wavelet bases in functional Banach spaces, as $L^p(\mathbb{R}^n)$, the operator defined by $\displaystyle T_\eta f(x) = \int_{\mathbb{R}^n}K_\eta (x,y) f(y) dy$ with a kernel given by $K_\eta (x,y) = \sum_{h \in \mathcal{H}} \eta(h) h(x) h(y)$ where $\mathcal{H}$ is the classical Haar system in $\mathbb{R}^n$ and $\eta$ is some bounded sequence defined on $\mathcal{H}$, is a singular integral operator when we give to $\mathbb{R}^n$ a suitable metric structure (see \cite{ACGN}).
Since $K_\eta$ is not translation invariant, the operator $T_\eta$ is not a convolution type operator in the classical euclidean sense. Nevertheless, the spectral form of $K_\eta(x,y)$ given by its symbol $\eta: \mathcal{H} \longrightarrow \mathbb{R}$, with respect to the Haar basis $\mathcal{H}$  which is independent of the points $x$ and $y$, is a good reason to consider $K_\eta$ as a standard convolution type kernel. 

On the other hand, a kernel whose spectral Haar analysis takes the form $$K_\eta (x,y) = \sum_{h \in \mathcal{H}} \eta(h,x) h(x) h(y)$$
for some $\eta : \mathcal{H} \times \mathbb{R}^n  \longrightarrow \mathbb{R}$, can be considered a variable kernel. A special case of variable kernel $K_\eta$ is considered by S. Petermichl in \cite{Pe1} as we shall see in Section 2.

In this work we aim to explore conditions on the variable symbol $\eta(h,x)$ in order to get kernels defining Calder\'on-Zygmund type operators with respect to a suitable dyadic metric. The  construction of dyadic cubes due to M. Christ (see \cite{C}) in spaces of homogeneous type becomes a basic tool in order to consider the problem in these general settings. 

The paper is organized as follows. In Section 2 we consider the variable kernel structure of Petermichl's operator in $\mathbb{R}$. In Section 3 we  introduce the basic properties of spaces of homogeneous type and we  define the dyadic family $\mathcal{D}$, the Haar system $\mathcal{H}$ and the dyadic metric $\delta$ in this general setting. Section 4 is devoted to introduce and prove the main result of this work providing sufficient conditions in the multiplier sequence in order to obtain obtain a Calder\'on-Zygmund  operator. Finally, in Section 5 we build Petermichl type operators on spaces of homogeneous type.\\
%\begin{theorem}
%Let $(X,d,\mu)$, $\mathcal{D}$, $\mathcal{H}$ and $\delta$ PONER CITA. Let $\eta: X \times \mathcal{H} \longrightarrow \mathbb{R}$ be a function which for each $h$ is a measurable function of $x \in X$ such that there exists $A>0$ with
%\begin{enumerate}
%\item[(a)] $|\eta(x,h)| \leq A,$ for $x \in X$ and $h \in \mathcal{H}$;
%\item[(b)]$|\eta(x',h) - \eta(x,h)| \leq A \frac{\delta(x,x')}{\mu(Q(h))}$, for $h \in \mathcal{H}$ and $x, x' \in X$. 
%\end{enumerate}
%Then the operator 
%$$T_\eta f(x) \ = \ \sum_{h \in \mathcal{H}} \eta(x,h)\left<f,h\right> h(x)$$
%is of Calder\'on-Zygmund type in the space $(X,\delta,\mu)$. Hence $T_\eta$ is bounded in $L^p$ $(1 < p < \infty)$ and of weak type $(1,1)$.
%\end{theorem}}

Throughout this work, we denote by $C$ a
constant that may change from one occurrence to other.

\section{On the Calder\'on-Zygmund structure of Petermichl's kernel}

In \cite{Pe1}, S. Petermichl introduce a dyadic kernel given in terms of the Haar functions by
 $$P(x,y)\ =\  \sum_{I \in \mathcal{D}} h_I(y) [h_{I^-}(x) - h_{I^+}(x)]$$ for $x,y \in \mathbb{R}^+$, $\mathcal{D}$ the dyadic intervals in $\mathbb{R}^+$, $h_I$ the Haar wavelets with support in the dyadic interval $I$ and $h_{I^-}$, $h_{I^+}$ the Haar wavelets in the left and right halves of the dyadic interval $I$. The corresponding operator is given by $$\mathcal{P} f(x) \ = \ \sum_{I \in \mathcal{D}}<f,h_I> \left(h_{I^-}(x) - h_{I^+}(x)\right).$$
\noindent
This operator is used in \cite{Pe1} to provide an outstanding formula for the Hilbert transform.

In \cite{AG} the authors proved that the kernel $P(x,y)$ has a standard Calder\'on-Zygmund structure when we consider the theory of singular integrals extended to metric measure spaces or, more precisely, to spaces of homogeneous type (see definition in Section 3). In other words, they show that 
$$P(x,y) = \frac{\Omega(x,y)}{\delta(x,y)}$$
with $\delta(x,y) = |I(x,y)|$ where $I(x,y)$ is the smallest dyadic interval in $\mathbb{R}^+$ containing  $x$ and $y$. They also prove that $\Omega$ is bounded and smooth with respect to the ultrametric $\delta$.
Before moving to the abstract setting in order to extend $P$ and $\mathcal{P}$, in this section we prove two elementary properties of the Petermichl's kernel that we shall explore later in the general frame work. Set $\mathcal{H}$ and $\mathcal{D}$ to denote the Haar system and dyadic family respectively in $\mathbb{R}^+$. For $h\in \mathcal{H}$ we denote with $I(h)$ the interval support of $h$, and we consider as $I^{--}_h$ the left quarter of $I(h)$,  $I^{-+}_h$ as the second quarter, $I^{+-}_h$ as the third quarter and $I^{++}_h$ as the last quarter of $I(h)$. 

\begin{proposition}
\item[\textit{(a)}]The operator $\mathcal{P}$ can be written as a variable kernel singular integral operator, in fact
$$\mathcal{P} f(x)\ =\ \frac{1}{\sqrt[]{ 2}}\sum_{h \in \mathcal{H}} \eta(x,h) <f,h> h(x)$$
with $\eta(x,h) = 1$ if $x \in I^{--}_h \cup I^{+-}_h$ and $m(x,h) = -1$ if $x \in I^{-+}_h \cup I^{++}_h$.
\item[\textit{(b)}]
If $\mathcal{P}^*$ denotes the adjoint of $\mathcal{P}$, then $\mathcal{P}\mathcal{P}^* = \mathcal{P}^*\mathcal{P} = 2 \mathcal{I}$, twice the identity in $L^2(\mathbb{R}^+)$.

\end{proposition}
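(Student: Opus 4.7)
For part (a), the plan is a pointwise identity on the support of each Haar function $h_I$. Using the standard normalization $h_K = |K|^{-1/2}(\chi_{K^-}-\chi_{K^+})$, I would expand $h_{I^-}$ and $h_{I^+}$ as signed linear combinations of the indicators of the four dyadic quarters $I^{--}_h, I^{-+}_h, I^{+-}_h, I^{++}_h$, and do the same for the product $\eta(x,h_I)\,h_I(x)$ using the definition of $\eta$. A direct comparison of these two expressions on each of the four quarters shows that they are proportional, and assembling the pieces inside the sum defining $\mathcal{P}$ gives the claimed variable-kernel representation. Since the set of all $h_I$ as $I$ runs over $\mathcal{D}$ is exactly $\mathcal{H}$, the switch of indexing from $I\in\mathcal{D}$ to $h\in\mathcal{H}$ is just relabeling.

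For part (b), the plan is to work entirely in the Haar basis and exploit its orthonormality. From the $L^2$ pairing
\begin{equation*}
\langle \mathcal{P}f, g\rangle \;=\; \sum_{I\in\mathcal{D}} \langle f, h_I\rangle \bigl(\langle g, h_{I^-}\rangle - \langle g, h_{I^+}\rangle\bigr),
\end{equation*}
one reads off
\begin{equation*}
\mathcal{P}^* g \;=\; \sum_{I\in\mathcal{D}} \bigl(\langle g, h_{I^-}\rangle - \langle g, h_{I^+}\rangle\bigr)\, h_I.
\end{equation*}
To compute $\mathcal{P}^*\mathcal{P}$, I would expand $\langle \mathcal{P}f, h_{I^\pm}\rangle$ by inserting the defining series of $\mathcal{P}$ and invoking the orthonormality $\langle h_J, h_K\rangle = \delta_{J,K}$. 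The combinatorial input driving the collapse is that a dyadic interval is the left half $J^-$ of its parent iff its relative position index is even, and the right half $J^+$ iff that index is odd; consequently the equality $J^- = I^+$ is impossible, so $\langle h_{J^-}, h_{I^+}\rangle = \langle h_{J^+}, h_{I^-}\rangle = 0$ for all $I, J\in\mathcal{D}$. Only the diagonal terms then survive, giving $\langle \mathcal{P}f, h_{I^-}\rangle = \langle f, h_I\rangle$ and $\langle \mathcal{P}f, h_{I^+}\rangle = -\langle f, h_I\rangle$, and hence $\mathcal{P}^*\mathcal{P}f = 2\sum_I \langle f, h_I\rangle h_I = 2f$. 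The product $\mathcal{P}\mathcal{P}^*$ would be obtained by the same strategy applied in the opposite order.

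The step I expect to be the most delicate is the bookkeeping in $\mathcal{P}\mathcal{P}^*$: the expansion there produces cross terms of the form $\sum_J \langle g, h_{J^-}\rangle h_{J^+}$ and $\sum_J \langle g, h_{J^+}\rangle h_{J^-}$, and extracting the identity requires a careful enumeration of sibling pairs using the fact that every dyadic interval in $\mathbb{R}^+$ factors uniquely as the left- or right-half of a unique parent, combined with the parity observation above. The easier ingredients are the Haar orthonormality and the same linearity bookkeeping already used in (a).
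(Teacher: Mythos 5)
Your treatment of part (a) and of the identity $\mathcal{P}^*\mathcal{P}=2\mathcal{I}$ is correct and essentially the paper's own argument: compare the two expressions quarter by quarter for (a), and for $\mathcal{P}^*\mathcal{P}$ use that $J^-=I^+$ and $J^+=I^-$ are impossible (every dyadic interval is a child of a unique parent, and is a left child or a right child but never both), so only the diagonal survives, giving $\langle\mathcal{P}f,h_{I^-}\rangle=\langle f,h_I\rangle$ and $\langle\mathcal{P}f,h_{I^+}\rangle=-\langle f,h_I\rangle$. One small point in (a): with the normalization $h_K=|K|^{-1/2}(\chi_{K^-}-\chi_{K^+})$ that you adopt, the ratio $(h^-(x)-h^+(x))/h(x)$ equals $\pm\sqrt{2}$, so the constant in front of the sum comes out as $\sqrt{2}$ rather than $1/\sqrt{2}$; the sign pattern of $\eta$ is as stated.

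The genuine gap is the half of (b) that you postponed. The ``careful enumeration of sibling pairs'' that you hope will make the cross terms in $\mathcal{P}\mathcal{P}^*$ collapse cannot succeed, because the identity $\mathcal{P}\mathcal{P}^*=2\mathcal{I}$ is false. Indeed
\begin{equation*}
\mathcal{P}\mathcal{P}^*g\;=\;\sum_{I\in\mathcal{D}}\bigl(\langle g,h_{I^-}\rangle-\langle g,h_{I^+}\rangle\bigr)\bigl(h_{I^-}-h_{I^+}\bigr),
\end{equation*}
and testing on $g=h_{I_0^-}$ gives $\mathcal{P}\mathcal{P}^*h_{I_0^-}=h_{I_0^-}-h_{I_0^+}\neq 2\,h_{I_0^-}$. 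The diagonal terms contribute only $g$ (each dyadic interval occurs exactly once as a child of its unique parent), while the cross terms $-\sum_I\bigl(\langle g,h_{I^-}\rangle h_{I^+}+\langle g,h_{I^+}\rangle h_{I^-}\bigr)$ do not vanish. Structurally, $2^{-1/2}\mathcal{P}$ is an isometry whose range is the closed span of $\{h_{I^-}-h_{I^+}\}_{I\in\mathcal{D}}$, and this range has the infinite-dimensional orthogonal complement spanned by $\{h_{I^-}+h_{I^+}\}_{I\in\mathcal{D}}$; hence $\mathcal{P}\mathcal{P}^*$ is twice the orthogonal projection onto that range, not twice the identity. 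You are not alone here: the paper's own proof establishes only $\mathcal{P}^*\mathcal{P}=2\mathcal{I}$ and is silent on $\mathcal{P}\mathcal{P}^*$, so the claim about $\mathcal{P}\mathcal{P}^*$ in the statement should be dropped or corrected rather than proved.
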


\begin{proof}
Let us start by proving {\textit{(a)}}. If we denote  with $h^-$ and $h^+$ the Haar wavelets in the left and right halves of the support of $h$, respectively, we have that the supports of $h(y)h(x)$ and $h(y)[h^-(x) - h^+(x)]$ coincide as subsets of $(\mathbb{R}^+)^2$. Then in the support of $h(x)h(y)$ we have that
\begin{eqnarray}
h(y)[h^-(x) - h^+(x)] & = & h(y)\frac{[h^-(x) - h^+(x)]}{h(y)h(x)}h(y)h(x) \nonumber \\
& = & \frac{1}{\sqrt[]{ 2}} \eta(x,h) h(y)h(x), \nonumber
\end{eqnarray}
as desired.\\
In order to prove {\textit{(b)}} observe that 
$$\mathcal{P}^* f(y) \ =\ \sum_{I \in \mathcal{D}} \left(\left<f,h_{I^-}\right> - \left<f,h_{I^+}\right>\right) h_I(y).$$
On the other hand, from the orthonormality of the system $\mathcal{H}$, for each $I \in \mathcal{D}$ we have that
$$\left<\sum_{J \in \mathcal{D}}<f,h_J> \left(h_{J^-} - h_{J^+}\right) \,,\,h_{I^-}\right> \ = \ <f,h_I><h_{I^-},h_{I^-}>$$
and
$$\left<\sum_{J \in \mathcal{D}}<f,h_J> \left(h_{J^-} - h_{J^+}\right)\,,\, h_{I^+}\right> \ = \ <f,h_I><h_{I^+},h_{I^+}>.$$
Therefore
\begin{eqnarray}
\mathcal{P}^*(\mathcal{P}f)(y) & = & \sum_{I \in \mathcal{D}} \left(<\mathcal{P}f,h_{I^-}> - <\mathcal{P}f,h_{I^+}>\right) h_I(y) \nonumber \\
& = & \sum_{I \in \mathcal{D}} \left<f,h_I\right> \left<h_{I^-},h_{I^-}\right> h_I(y) + \sum_{I \in \mathcal{D}} \left<f,h_I\right> \left<h_{I^+},h_{I^+}\right> h_I(y) \nonumber \\
& = & 2f, \nonumber
\end{eqnarray}
as desired.
\end{proof}

\section{Dyadic families and Haar systems in spaces of homogeneous type}

Let us first briefly recall the basic properties of the general theory of spaces of homogeneous type. Assume that $X$ is a set, a nonnegative symmetric function $d$ on
$X \times X$ is called a quasi-distance if there exists a constant
$K$ such that
\begin{equation*}
\ d(x,y) \leq {{K}} [d(x,z) + d(z,y)],
\end{equation*} for every $x,y,z \in
X$ , and $d(x,y) = 0$ if and only if $x=y$.

We shall say that $(X,d,\mu)$ is a space of homogeneous type if
$d$ is a quasi-distance on $X$, $\mu$ is a positive Borel measure
defined on a $\sigma$-algebra of subsets of $X$ which contains the
balls, and there exists a constant $A$ such that 
\begin{equation} \label{doubling}
0\ <\ \mu(B(x,2r))\ \leq \ A\ \mu(B(x,r)) \ < \ \infty
\end{equation}holds for every
$x\in X$ and every $r>0$. This property is usually named as the doubling condition.\\
The construction of dyadic type families of subsets in metric or quasi-metric spaces with some inner and outer metric control of the sizes of the dyadic sets is given in \cite{C}. These families satisfy all the relevant properties of the usual dyadic cubes in $\mathbb{R}^n$ and are the basic tool to build wavelets on a metric space of homogeneous type (see \cite{A} or \cite{ABI}). Actually Christ's construction in \cite{C} shows the existence of dyadic families in spaces of homogeneous type. Nevertheless, in order to define Haar wavelets all we need is a dyadic family satisfying the following properties that we state as a definition and we borrow from \cite{ABI}.
\begin{definition}\label{fliady} Let $(X,d,\mu)$ be a metric space of homogeneous type. We say that $\mathcal{D} = {\bigcup_{j \in \mathbb{Z}}}\mathcal{D}^j$ is a dyadic family on $X$ with parameter $\lambda \in (0,1)$ if each $\mathcal{D}^j$ is a family
of Borel subsets $Q$ of $X$, such that

\item[\textit{(d.1)}]\textit{for every $j \in \mathbb{Z}$ the cubes in
$\mathcal{D}^j$ are pairwise disjoint;}
\item[\textit{(d.2)}]\textit{for every $j \in \mathbb{Z}$ the family $\mathcal{D}^j$  covers $X$ in the sense that $X = \bigcup_{Q \in \mathcal{D}^j}Q;$}
\item[\textit{(d.3)}] \textit{if $Q \in \mathcal{D}^j$ and $i < j$, then there
exists a unique $\tilde{Q} \in \mathcal{D}^i$ such that $Q
\subseteq \tilde{Q}$;}
\item[\textit{(d.4)}] \textit{if $Q \in \mathcal{D}^j$ and  $\tilde{Q} \in
\mathcal{D}^i$ with $i \leq j$, then either $Q \subseteq
\tilde{Q}$ or $Q \cap \tilde{Q} = \emptyset$;}
\item[\textit{(d.5)}] \textit{there exist two constants $a_1$ and $a_2$
such that for each $Q \in \mathcal{D}^j$ there exists a point $x \in Q$ that satisfies $B(x,a_1 \lambda^j) \subseteq Q \subseteq B(x,a_2 \lambda^j)$.}

\end{definition}

The following properties can be deduced from \textit{(d.1)} to \textit{(d.5)}, see \cite{ABN}.

\begin{lemma}  Let $\mathcal{D}$ be a dyadic family, then

\item[\textit{(d.6)}] \textit{there exists a positive integer M depending on $a_i$, $i=1,2$ in $(d.5)$ and on the doubling constant $A$ in \eqref{doubling} such that for every $j \in \mathbb{Z}$ and all $Q\in  \mathcal{D}^j$ the inequalities $1 \leq \#(\mathcal{L}(Q)) \leq M$ hold, where $\mathcal{L}(Q) = \{Q^{'} \in \mathcal{D}^{j+1}: Q^{'} \subseteq Q\}$ and $\#(B)$ denote the cardinal of $B$;}
%\item [\textit{(d.7)}] \textit{the families $\mathcal{\tilde{D}}^{j} = \{Q\in  \mathcal{D}^j :\#(\mathcal{L}(Q)) > 1\}$, $j \in \mathbb{Z}$ are pairwise disjoints;}
%\item [\textit{(d.8)}] \textit{we have a well defined function $\mathcal{J}:\mathcal{\tilde{D}} = {\bigcup_{j \in \mathbb{Z}}}\mathcal{\tilde{D}}^{j} \longrightarrow
%\mathbb{Z}$ given by $Q \mapsto \mathcal{J}(Q)$ if $Q \in
%\mathcal{\tilde{D}}^{\mathcal{J}(Q)}$;}
\item [\textit{(d.7)}] \textit{there exists a positive constant $C$ such that $\mu(Q) \leq C \mu(Q^{'})$ for all $Q \in \mathcal{\tilde{D}}$ and every $Q^{'} \in \mathcal{L}(Q)$.}

\end{lemma}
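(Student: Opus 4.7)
The plan is to deduce both (d.6) and (d.7) from the two-sided ball sandwich in (d.5) together with the doubling condition \eqref{doubling}; properties (d.1)--(d.4) will only play a combinatorial role, used to identify children and to guarantee disjointness.

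First I would dispatch the lower bound $\#(\mathcal{L}(Q))\geq 1$ in (d.6). Pick any point $x\in Q$; by (d.2) applied at level $j+1$ it belongs to some $Q'\in\mathcal{D}^{j+1}$; by (d.3) applied to $Q'$ with $i=j$ there is a unique ancestor $\tilde{Q}\in\mathcal{D}^j$ with $Q'\subseteq \tilde{Q}$; since $x$ lies simultaneously in $Q$ and in $\tilde{Q}$, the disjointness in (d.1) forces $\tilde{Q}=Q$, so $Q'\in\mathcal{L}(Q)$.

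For the upper bound in (d.6) and the inequality in (d.7) I would extract a single measure comparison and then reap both conclusions from it. Using (d.5), pick $x_Q\in Q$ with $Q\subseteq B(x_Q, a_2\lambda^j)$ and, for each $Q'\in\mathcal{L}(Q)$, a point $x_{Q'}\in Q'$ with $B(x_{Q'}, a_1\lambda^{j+1})\subseteq Q'$. These inner balls are pairwise disjoint by (d.1) and all sit inside $B(x_Q, a_2\lambda^j)$. Since $x_{Q'}\in Q\subseteq B(x_Q, a_2\lambda^j)$, the quasi-triangle inequality gives the recentering $B(x_Q, a_2\lambda^j)\subseteq B(x_{Q'}, 2Ka_2\lambda^j)$. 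Choosing $n\in\mathbb{N}$ with $2^n\geq 2Ka_2/(a_1\lambda)$ and iterating \eqref{doubling} exactly $n$ times, I obtain a constant $C=A^n$, depending only on $a_1$, $a_2$, $\lambda$, $K$, $A$, such that
\[
\mu\bigl(B(x_Q, a_2\lambda^j)\bigr)\leq C\,\mu\bigl(B(x_{Q'}, a_1\lambda^{j+1})\bigr)
\]
uniformly in $Q'\in\mathcal{L}(Q)$. Property (d.7) then follows instantly from $\mu(Q)\leq \mu(B(x_Q, a_2\lambda^j))$ and $\mu(B(x_{Q'}, a_1\lambda^{j+1}))\leq \mu(Q')$. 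For (d.6) I sum the disjoint inner balls inside $B(x_Q, a_2\lambda^j)$ and invoke the same estimate in reverse to obtain
\[
\#(\mathcal{L}(Q))\cdot C^{-1}\mu\bigl(B(x_Q, a_2\lambda^j)\bigr)\leq \sum_{Q'\in\mathcal{L}(Q)}\mu\bigl(B(x_{Q'}, a_1\lambda^{j+1})\bigr)\leq \mu\bigl(B(x_Q, a_2\lambda^j)\bigr),
\]
so that $M=C$ works.

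The only mildly delicate step is the ball-recentering that forces the shift factor $2K$: because $d$ is only a quasi-distance, moving a ball's center costs a factor of $K$ on the radius, and one must absorb this cost before applying the doubling condition. Once this is handled, everything else is bookkeeping, and it is transparent that the resulting $M$ and $C$ depend only on the parameters listed in the statement.
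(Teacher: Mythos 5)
Your argument is correct and is the standard one. Note that the paper itself does not prove this lemma --- it simply cites \cite{ABN} for it --- and the argument given there (and in Christ's original construction) is essentially yours: inner balls of the children are disjoint inside the outer ball of the parent, and a recentering plus finitely many applications of the doubling condition yields the uniform measure comparison from which both the bound on $\#(\mathcal{L}(Q))$ and $(d.7)$ follow. The only cosmetic remark is that your constants also depend on $\lambda$ and on the quasi-triangle constant $K$, not just on $a_1,a_2,A$ as the statement loosely asserts, but these are all geometric constants of the space and the dyadic family, so this is harmless.
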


It is easy to give examples of dyadic systems $\mathcal{D}$ such that a dyadic cube $Q$ belong to different levels $j \in \mathbb{Z}$. Since we are interested in the identification of those scales and places of partition which shall give rise to the Haar functions, we consider the subfamily $\mathcal{\tilde{D}}$ of $\mathcal{D}$ given by 
$$\mathcal{\tilde{D}} = {\bigcup_{j \in \mathbb{Z}}}\mathcal{\tilde{D}}^{j},$$
with
$$\mathcal{\tilde{D}}^{j} = \{Q\in  \mathcal{D}^j :\#(\{Q^{'} \in \mathcal{D}^{j+1}: Q^{'} \subseteq Q\}) > 1\}.$$

Properties $\textit{(d.1)}$ to $\textit{(d.6)}$ allow us to obtain the following aditional properties for $\mathcal{\tilde{D}}$.

\begin{enumerate}
\item [\textit{(d.8)}]
The families $\mathcal{\tilde{D}}^{j}$, $j \in \mathbb{Z}$ are pairwise disjoints.
\item [\textit{(d.9)}]
The function $\mathcal{J}:\mathcal{\tilde{D}} \longrightarrow
\mathbb{Z}$ given by $Q \mapsto \mathcal{J}(Q)$ if $Q \in
\mathcal{\tilde{D}}^{\mathcal{J}(Q)}$ is well defined.
\end{enumerate}

Let $\mathcal{D}$ be a dyadic family. We define, for each dyadic cube $Q$ in $\mathcal{D}$, the quadrant of $X$ that contain the cube $Q$, {\bf{{C}}}$(Q)$, by
$${\textbf{{C}}}(Q) = \underset{\{Q^{'}\in \mathcal{D}: Q \subseteq Q^{'}\}}{\bigcup}Q^{'}.$$

Following the lines in \cite{ABI} for the case of Christ's dyadic cube, from {\textit{(d.6)}} and since all the dyadic cubes $Q$ in $\mathcal{D}$ are spaces of homogeneous type with uniform doubling  constant, we get that if $(X,d,\mu)$ is a space of homogeneous type and if $\mathcal{D}$ is a dyadic family, then there exists a positive integer $N$ (that depend of the geometric constants of $(X,d,\mu)$) and disjoint dyadic cubes $Q_{\alpha}, \alpha=1,...,N$ such that $$X  = \bigcup_{\alpha=1,...,N}{\textbf{{C}}}_{\alpha},$$
where ${\textbf{{C}}}_{\alpha} = {\textbf{{C}}}(Q_{\alpha})$. That is, there exists a finite number of quadrants these are a partition of $X$ and each one of them is a space of homogeneous type (see \cite{ABI})

In the classic euclidean context $\mathbb{R}^n$, the dyadic analysis leads to consider each quadrant separately. Then, without loss of generality, we will assume from now on that $X$ itself is a quadrant for $\mathcal{D}$.

Along this work,  given a dyadic family $\mathcal{D}$ we denote by $\delta(x,y)$ the dyadic metric associated to $\mathcal{D}$ for $x,y \in X$. That is $\delta$ is the function defined in $X \times X$ given by
\begin{align}\label{ultrametrica}
\delta (x,y)=
\begin{cases}
min\{\mu(Q): x,y \in Q, Q \in \tilde{\mathcal{D}}\}&\text{\,if\,} \ \  x \not= y \\
0 &\text{\,if\,} \ \ x=y.
\end{cases}
\end{align}

Now we state and prove the main result of this section. The proof follow the technique used in \cite{MS1} where the authors prove that each quasi-metric space $(X,d)$ is metrizable and that $d$ is equivalent to
$\rho^\beta$, where $\rho$ is a distance on $X$ and $\beta \geq 1$. Moreover, they show that all spaces of homogeneous type $(X,d,\mu)$ can be normalized in the  sense that there exists a metric $\rho$ on $X$ and two constants $C_1$ y $C_2$ such that 
\begin{equation}\label{normal}
C_1 r\leq \mu(B_{\rho}(x,r)) \leq C_2 r, 
\end{equation}
where $B_{\rho}(x,r)=\{y\in X \ : \ \rho(x,y)<r\}$.
In general, if $\rho$ satisfies (\ref{normal}), we say that $(X,\rho ,\mu)$ is a normal space of homogeneous type or $1-$Ahlfors.  
 
\begin{lemma}\label{normalizacion diadica}
Let $(X,d,\mu)$ be a space of homogeneous type and let $\mathcal{D}$ be a dyadic family. Then $(X,\delta,\mu)$ is a normal space of homogeneous type. Moreover, the characteristic functions of dyadic cubes are Lipschitz functions in $(X,\delta)$.
\end{lemma}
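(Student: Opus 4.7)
The plan is to treat the statement in three stages: verify that $\delta$ is a genuine (ultra)metric, identify the $\delta$-balls with specific dyadic cubes so that the $1$-Ahlfors estimate drops out, and extract the Lipschitz bound from the same geometric description.

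First, I would check that $\delta$ is well-defined, symmetric and positive off the diagonal, and that it satisfies the strong triangle inequality. The minimum in \eqref{ultrametrica} is attained because, $X$ being a single quadrant, the family of cubes in $\tilde{\mathcal{D}}$ containing two fixed points $x\neq y$ is non-empty, totally ordered by inclusion via \textit{(d.4)}, and indexed by distinct integer levels by \textit{(d.8)}--\textit{(d.9)}, so the set of admissible measures is discrete and bounded below. For three points $x,y,z\in X$, the cubes $Q_{xz}$, $Q_{zy}$ realizing $\delta(x,z)$ and $\delta(z,y)$ both contain $z$, so \textit{(d.4)} forces one to be contained in the other, and the larger one covers all three points. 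This yields $\delta(x,y)\leq\max\{\delta(x,z),\delta(z,y)\}$, so $\delta$ is an ultrametric on $X$.

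Second, I would establish the geometric identity
$$B_\delta(x,r)=\bigcup\{Q\in\tilde{\mathcal{D}}:x\in Q,\ \mu(Q)<r\},$$
and observe that this union coincides with the largest cube $\tilde{Q}^{(r)}(x)$ in the nested chain of $\tilde{\mathcal{D}}$-cubes through $x$ with measure below $r$; property \textit{(d.5)} together with the doubling condition on $(X,d,\mu)$ guarantees that this chain extends to arbitrarily small measures, so $\delta$-balls shrink to $\{x\}$ as $r\to 0$. The upper bound $\mu(B_\delta(x,r))<r$ is then immediate. For the lower bound, the immediate $\tilde{\mathcal{D}}$-successor $Q'$ of $\tilde{Q}^{(r)}(x)$ satisfies $\mu(Q')\geq r$; a short set-theoretic argument based on \textit{(d.4)} shows that the set underlying $\tilde{Q}^{(r)}(x)$ is a $\mathcal{D}$-child of $Q'$ (any intermediate $\mathcal{D}$-cube containing $x$ that properly shrinks $Q'$ would otherwise produce a cube of $\tilde{\mathcal{D}}$ strictly between $\tilde{Q}^{(r)}(x)$ and $Q'$, contradicting consecutiveness). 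Applying \textit{(d.7)} then gives $\mu(Q')\leq C\mu(\tilde{Q}^{(r)}(x))$, hence $\mu(B_\delta(x,r))\geq r/C$. Borel measurability of $\delta$-balls is automatic since they are dyadic cubes, so $(X,\delta,\mu)$ is $1$-Ahlfors and thus a normal space of homogeneous type.

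For the Lipschitz assertion, fix $Q\in\mathcal{D}$ and $x,y\in X$; only the case $x\in Q$, $y\notin Q$ is non-trivial. The cube $\tilde{Q}^*\in\tilde{\mathcal{D}}$ realizing $\delta(x,y)$ contains $x$, so by \textit{(d.4)} it is either contained in $Q$, contains $Q$, or is disjoint from it; the first and third options are ruled out by $y\in\tilde{Q}^*\setminus Q$, forcing $Q\subsetneq\tilde{Q}^*$. Therefore $\delta(x,y)=\mu(\tilde{Q}^*)\geq\mu(Q)$, which gives $|\chi_Q(x)-\chi_Q(y)|=1\leq\mu(Q)^{-1}\delta(x,y)$, so $\chi_Q$ is Lipschitz with constant $\mu(Q)^{-1}$.

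The main obstacle I anticipate is the bookkeeping between $\mathcal{D}$ and its refinement $\tilde{\mathcal{D}}$: cubes may reappear across many levels of $\mathcal{D}$ without splitting, so I must check carefully that consecutive cubes in the $\tilde{\mathcal{D}}$-chain through $x$ are related by the measure-comparability inequality coming from \textit{(d.7)} even though they need not be literal parent-child pairs in $\mathcal{D}$, and that this chain extends to arbitrarily small measures so that the Ahlfors lower bound holds uniformly in $r$.
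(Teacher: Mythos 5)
Your proposal is correct and follows essentially the same route as the paper: identify each $\delta$-ball with the appropriate cube in the chain of $\tilde{\mathcal{D}}$-cubes through $x$, read off the upper Ahlfors bound from that identification, obtain the lower bound from the measure comparability of a cube with its dyadic offspring (property \textit{(d.7)}), and deduce the Lipschitz estimate from the fact that a cube separating $x$ from $y$ must be contained in $Q(x,y)$. The only differences are that you spell out details the paper delegates or glosses over (the ultrametric verification, which the paper cites from the literature, and the bookkeeping between consecutive cubes of the $\tilde{\mathcal{D}}$-chain and literal $\mathcal{D}$-children, which you correctly identify and resolve).
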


\begin{proof}

For each $z \in X$ we write $Q_j(z)$ to denote the unique dyadic cube $Q \in  \tilde{\mathcal{D}_j}$ such that $z \in Q$.
Without loss of generality we can assume that $X$ in not bounded.
Thus, if $x \in X$, $r>0$, and $j_0$ is an integer in $\mathbb{Z}$ such that 
\begin{equation}\label{elecion jcero}
\mu(Q_{j_0}(x)) \ \leq r \ < \ \mu(Q_{j_0 - 1}(x)),
\end{equation}
then 
\begin{equation}\label{bola igual cubo}
B_{\delta}(x,r) = Q_{j_0}(x).
\end{equation}
In fact if $y \in Q_{j_0}(x)$ then $x,y \in Q_{j_0}(x)$ and therefore $\delta(x,y) \leq \mu(Q_{j_0}(x)) \ \leq r$ this implies that $Q_{j_0}(x) \subseteq B_{\delta}(x,r)$.
On the other hand, let $y\in B_\delta(x,r)$, if $y \notin Q_{j_0}(x)$ then $Q_{j_0}(x) \cap Q_{j_0}(y) = \emptyset$. Let $n \in \mathbb{N}$ be the first positive integer such that $Q_{j_0}(y) \subseteq Q_{j_0-n}(x)$, then we get that $\delta(x,y) = \mu (Q_{j_0-n}(x)) \geq \mu(Q_{j_0-1}(x)) > r$, this is a contradiction. Hence $y \in Q_{j_0}(x)$ and then  $B_{\delta}(x,r) \subseteq Q_{j_0}(x)$.
In orden to prove that $(X,\delta,\mu)$ is a normal space of homogeneous type, observe that it is not difficult to see that $(X,\delta)$ is a metric space (see [2]) moreover, $\delta$ is an ultra-metric on $X$. Let $x \in X$ be and $r > 0$, consider the number $j_0$ given in (\ref{elecion jcero}). Since $B_{\delta}(x,r) = Q_{j_0}(x)$, we get that $\mu(B_{\delta}(x,r)) = \mu(Q_{j_0}(x) ) \leq r$.
On the other hand, since $Q_{j_0}(x) \subseteq \mathcal{L}(Q_{j_0-1}(x))$, by the doubling property of the measure \eqref{doubling} there exists a positive constant $C$ such that $\mu(Q_{j_0-1}(x)) \leq C \mu(Q_{j_0}(x))$,
then from (\ref{elecion jcero}) and (\ref{bola igual cubo}) we get that 
$$r < \mu(Q_{j_0-1}(x)) \leq C \mu(Q_{j_0}(x)) = C \mu(B_{\delta}(x,r)).$$
Hence, $\frac{r}{C} < \mu(B_{\delta}(x,r)).$
Finally, for the last statement, let $x,y \in X$ and $Q \in \tilde{\mathcal{D}}$. If $x,y \in Q$ or if $y \notin Q$, $x \notin Q$, then $\chi_{_Q}(x) - \chi_{_Q}(y) = 0$. If $Q$ contain only the point $x$ or the point $y$ and $Q(x,y)$ is the smallest dyadic cube such that $x,y \in Q(x,y)$, then $\delta(x,y) = \mu(Q(x,y)) \geq \mu(Q)$. Hence $|\chi_{_Q}(x) - \chi_{_Q}(y)| = 1 \leq \frac{1}{\mu(Q)}\delta(x,y)$.
\end{proof}
From now on we shall denote by $Q(x,y)$  the smallest dyadic cube such that $x,y \in Q(x,y)$.
From each dyadic system $\mathcal{D}$ as above we can associate a Haar type systems that we borrow from (\cite{ABN}).

\begin{definition}\label{haardyadic} Let $\mathcal{D}$ be a dyadic family on $(X,d,\mu)$. A system $\mathcal{H}$ of simple Borel measurable real functions $h$ on $X$ is said to be a Haar system associated to $\mathcal{D}$ if it is an orthonormal basis of $L^2(X,\mu)$ such that

\item[\textit{(h.1)}] \textit{ For each $h \in \mathcal{H}$ there exists a unique $j \in \mathbb{Z}$ and a cube
$Q(h) \in  \mathcal{\tilde{D}}^j$ such that $\{x \in X: h(x) \not=0\} \subseteq Q(h)$, and this property does not hold for any cube
in $\mathcal{D}^{j+1}$.}
\item[\textit{(h.2)}] \textit{ For every $Q \in \mathcal{\tilde{D}}$ there exist exactly $M_Q = \#(\mathcal{L}(Q)) - 1 \geq 1$ functions $h \in \mathcal{H}$ such
that (h.1) holds. We denote with $\mathcal{H}(Q)$  the set
of all these functions $h$.}
\item[\textit{(h.3)}] \textit{ For each  $h \in \mathcal{H}$ we have that $\int_X h
d \mu = 0$.}
\item[\textit{(h.4)}] \textit{ For each $Q\in  \mathcal{\tilde{D}}$ let $V_Q$ denote the
vector space of all functions on $Q$ which are constant on each
$Q^{'} \in \mathcal{L}(Q)$. Then the system
$\{\frac{\chi_{_{Q}}}{(\mu(Q))^{1/2}}\ \}\ \bigcup \mathcal{H}(Q)$
is an orthonormal basis for $V_Q$.}
\item[\textit{(h.5)}] \textit{ There exists a positive constant $C$ such that the inequality $|h(x)| \leq C |h(y)|$
holds for almost every $x$ and $y$ in $Q(h)$ and every $h \in \mathcal{H}$.}

\end{definition}

 Observe also that from \textit{(d.7)}, \textit{(h.4)} and \textit{(h.5)} we get that there exists two positive constants $C_1$ and $C_2$ such that
\begin{equation}\label{norma infinito de h}
\frac{C_1}{\mu(Q(h))^{1/2}} \leq |h(x)| \leq \frac{C_2}{\mu(Q(h))^{1/2}},
\end{equation}
for all $h \in \mathcal{H}$ and $x \in Q(h)$.

\section{On convolution and non-convolution type singular integral operators in metric measure spaces.}

Let $(X,d,\mu)$ a space of homogeneous type,  $\mathcal{D}$ and $\mathcal{H}$ the dyadic family of cubes and the Haar system associated given in Definitions  \ref{fliady} and \ref{haardyadic} respectively. For simplicity we denote by $L^2=L^2(X,\mu)$ of square integrable real functions defined on $X$. 
 Since $\mathcal{H}$ is an orthonormal basis for $L^2$, we have the resolution of the identity given by 

$$f \ = \ \sum_{h \in \mathcal{H}} \left<f,h\right> h.$$
The operators
\begin{equation}\label{op convolution}
T_\eta f(x) = \sum_{h \in \mathcal{H}} \eta(h)\left<f,h\right> h(x),
\end{equation}
with $\eta$ a  bounded function defined on $\mathcal {H}$,  or more generally
\begin{equation}\label{op no convolution}
T_\eta f(x) = \sum_{h \in \mathcal{H}} \eta(x,h)\left<f,h\right> h(x),
\end{equation}
with $\eta$ a bounded function defined on $X \times \mathcal{H} $, are bounded in $L^2$.

With the heuristics described in the introduction we may think that the operator  as in  (\ref{op convolution}) is of convolution type while that in (\ref{op no convolution}) is of non-convolution type singular. 
In this section we give a sufficient condition on $\eta(x,h)$ in such a way that $T_\eta$ defined by (4.2) becomes a Calder\'on-Zygmund type operator in $(X,d,\mu)$.

A bounded linear operator $T:L^2 \longrightarrow L^2$ is said to be of Calder\'on-Zygmund type in $(X,\delta ,\mu)$ if there exists $K \in L^1_{loc}(X \times X\setminus\Delta)$, with $\Delta$ the diagonal of $X \times X$, such that
\begin{enumerate}
\item [$(1)$]
 there exists a positive constant $C$ such that $|K(x,y)| \leq \frac{C}{\delta(x,y)}$ for $x,y \in X$ with $x \not= y$,
 
\item [$(2)$]
there exists two positive constants $C$ and $\gamma$ such that

\begin{enumerate}
\item[$(2.a)$] $|K(x',y) - K(x,y)| \leq C \frac{\delta(x',x)^\gamma}{\delta(x,y)^{1+\gamma}},$ \, \, \text{if} \, \, $2 \delta(x',x) \leq \delta(x,y)$;
\item[$(2.b)$]
$|K(x,y') - K(x,y)| \leq C \frac{\delta(y,y')^\gamma}{\delta(x,y)^{1+\gamma}},$ \, \, \text{if} \, \, $2 \delta(y',y) \leq \delta(x,y)$;
\end{enumerate}

\item [$(3)$]
for $\varphi , \psi \in \mathcal{S}(\mathcal{H})$, the linear span of $\mathcal{H}$, with $supp \varphi \cap supp \psi = \emptyset$, we have  
$$<T(\varphi), \psi>\ =\ \int\int_{X \times X}K(x,y) \varphi(x) \psi(y) d(\mu \times \mu)(x,y).$$
\end{enumerate}

The main result of this section is contained in the following statement.

\begin{theorem}\label{principal teo}
Let $(X,d,\mu)$ a space of homogeneous type,  $\mathcal{D}$ a dyadic family, $\mathcal{H}$ a Haar system and $\delta$ defined in \eqref{ultrametrica}. Let $\eta: X \times \mathcal{H} \longrightarrow \mathbb{R}$ be a function such that  is a measurable function in $x \in X$ for each $h\in \mathcal{H}$ and there exists a constant $B>0$ such that

\item[(a)]
$|\eta(x,h)| \leq B,$ for $x \in X$ and $h \in \mathcal{H}$
\item[(b)]
$|\eta(x',h) - \eta(x,h)| \leq B \frac{\delta(x,x')}{\mu(Q(h))}$, for $h \in \mathcal{H}$ and $x, x' \in X$. 

Then the operator 
$$T_\eta f(x) \ = \ \sum_{h \in \mathcal{H}} \eta(x,h)\left<f,h\right> h(x)$$
is of Calder\'on-Zygmund type in the space of homogeneous type $(X,\delta,\mu)$. Hence $T_\eta$ is bounded on $L^p(X)$ $(1 < p < \infty)$ and of weak type $(1,1)$.
\end{theorem}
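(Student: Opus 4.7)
The plan is to identify $T_\eta$ as a Calder\'on--Zygmund operator on the normal ($1$-Ahlfors) space of homogeneous type $(X,\delta,\mu)$ furnished by Lemma \ref{normalizacion diadica}. I would exhibit the integral kernel
\[
K(x,y)=\sum_{h\in\mathcal{H}}\eta(x,h)\,h(x)\,h(y)
\]
and verify the size condition $(1)$, both smoothness conditions $(2.a)$ and $(2.b)$, the $L^2$-boundedness, and the representation $(3)$ for $\varphi,\psi\in\mathcal{S}(\mathcal{H})$ with disjoint supports. The structural fact driving every kernel estimate is that a non-vanishing summand $h(x)h(y)$ forces $Q(h)\supseteq Q(x,y)$, so the series defining $K$ runs only over the chain of dyadic ancestors of $Q(x,y)$; the geometric growth of these ancestors from (d.7), combined with the uniform bounds \eqref{norma infinito de h}, turns each such sum into a convergent geometric series.

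For $(1)$, this chain argument combined with hypothesis (a) gives
\[
|K(x,y)|\le CB\sum_{Q\supseteq Q(x,y)}\frac{1}{\mu(Q)}\le \frac{C'B}{\mu(Q(x,y))}=\frac{C'B}{\delta(x,y)}.
\]
For $(2.a)$, under $2\delta(x,x')\le\delta(x,y)$ I would split
\[
K(x',y)-K(x,y)=\underbrace{\sum_h[\eta(x',h)-\eta(x,h)]h(x')h(y)}_{\mathrm{I}}+\underbrace{\sum_h\eta(x,h)[h(x')-h(x)]h(y)}_{\mathrm{II}}.
\]
A short case analysis using the ultrametric shows $\mathrm{II}=0$: in any non-trivial summand, the constancy of $h$ on the children of $Q(h)$ (by (h.4)) together with $h(x)\ne h(x')$ forces $\mu(Q(h))\le\delta(x,x')$, while $h(y)\ne 0$ and $\{x,x'\}\cap Q(h)\neq\varnothing$ forces $\mu(Q(h))\ge\delta(x,y)$, contradicting the hypothesis. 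For $\mathrm{I}$, the Lipschitz condition (b) combined with \eqref{norma infinito de h} produces $|\mathrm{I}|\le CB\,\delta(x,x')\sum_{Q\supseteq Q(x',y)}\mu(Q)^{-2}\le C'B\,\delta(x,x')/\delta(x,y)^2$, giving $(2.a)$ with exponent $\gamma=1$. The same case analysis applied to $(2.b)$ shows that, under $2\delta(y,y')\le\delta(x,y)$, every non-trivial summand of $K(x,y')-K(x,y)$ leads to a contradiction, so this difference vanishes outright; this reflects the fact that $K$ is constant in $y$ on every $\delta$-ball of radius less than $\delta(x,y)/2$.

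The subtlest step is the $L^2$-boundedness. I would freeze the symbol at a reference point $x_h\in Q(h)$ and split $T_\eta=T^\sharp+T^\flat$, where $T^\sharp f=\sum_h\eta(x_h,h)\langle f,h\rangle h$ is a Haar multiplier with scalar symbol bounded by $B$, hence $L^2$-bounded by Parseval, and $T^\flat f(x)=\sum_h[\eta(x,h)-\eta(x_h,h)]\langle f,h\rangle h(x)$ is controlled scale-by-scale: each scale piece is dominated in $L^2$ via (b) by the corresponding martingale difference of $f$, and almost-orthogonality between scales (Cotlar--Stein type, exploiting the Lipschitz decay from (b)) assembles the full bound. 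The representation $(3)$ then follows from absolute convergence of the series for $K$ away from the diagonal plus Fubini. With $(1), (2.a), (2.b), (3)$ and the $L^2$-boundedness in hand, $T_\eta$ is a Calder\'on--Zygmund operator in $(X,\delta,\mu)$, and the standard machinery for such operators in normal spaces of homogeneous type then yields the $L^p$-boundedness for $1<p<\infty$ and the weak $(1,1)$ estimate.
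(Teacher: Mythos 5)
Your kernel analysis is sound and essentially reproduces the paper's argument: the size bound $(1)$ comes from summing $\mu(Q)^{-1}$ over the geometrically growing chain of ancestors of $Q(x,y)$, and for $(2.a)$ you use the same splitting of $K(x',y)-K(x,y)$ into $\mathrm{I}+\mathrm{II}$ that the paper uses. Your observation that $\mathrm{II}$ vanishes identically (and that the whole difference in $(2.b)$ vanishes) is correct and is in fact a little sharper than the paper, which instead estimates $\mathrm{II}$ through the Lipschitz bound $|h(x)-h(x')|\le C\,\delta(x,x')\,\mu(Q(h))^{-3/2}$ of \eqref{h en x menos xprima}; both routes give $\gamma=1$. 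The verification of $(3)$ by testing on $\mathcal{S}(\mathcal{H})$ also matches the paper.

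The genuine gap is the $L^2$ step, and you are right that it is the delicate one: the paper disposes of it with the single line $\|T_\eta f\|_2\le\|\eta\|_\infty\|f\|_2$, which is only valid when $\eta$ is independent of $x$. But your freeze-and-perturb scheme does not close either. The building blocks $\psi_h(x)=[\eta(x,h)-\eta(x_h,h)]h(x)$ of $T^\flat$ have no vanishing mean, so $T^\flat$ contains a paraproduct component and the almost-orthogonality between the scale pieces fails: Schur-testing the matrix $\langle\psi_h,\psi_{h'}\rangle$ produces no decay in the scale separation. In fact hypotheses (a) and (b) alone do not imply $L^2$-boundedness. On $\mathbb{R}^+$ with the classical Haar system take $\eta(x,h_I)=|I|^{1/2}h_I(x)$; then (a) holds with $B=1$ and (b) holds with $B=2$ (the only nonzero case with $x,x'\in I$ gives $|\eta(x,h_I)-\eta(x',h_I)|=2$ while $\delta(x,x')=|I|$), yet $T_\eta f=\sum_I|I|^{-1/2}\langle f,h_I\rangle\chi_I$ is the adjoint of the paraproduct with symbol $b=\sum_I|I|^{1/2}h_I$, whose Haar coefficients violate the Carleson condition since $\sum_{I\subseteq J}|\langle b,h_I\rangle|^2=\sum_{I\subseteq J}|I|=\infty$ for every dyadic $J$. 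Concretely, with $f_N=\sum_{I\subseteq[0,1),\,|I|\ge 2^{-N}}|I|^{1/2}h_I$ one has $\|f_N\|_2^2=N+1$ while $\langle T_\eta f_N,\chi_{[0,1)}\rangle=N+1$, so no uniform $L^2$ bound can hold. An extra cancellation hypothesis of $T_\eta^*(1)\in \mathrm{BMO}$ (Carleson) type is therefore indispensable; neither your Cotlar--Stein sketch nor the paper's one-line assertion derives the $L^2$ bound from (a) and (b) alone.
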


\begin{proof}
The $L^2$ boundedness of $T_\eta$ follows from $(a)$ with $\|T_\eta f\|_2 \leq \|\eta\|_\infty \|f\|_2$. By testing $T_\eta$ with simple function in $\mathcal{S}(\mathcal{H})$, we see that 
$$K(x,y) \ =\ \sum_{h \in \mathcal{H}} \eta(x,h) h(y) h(x)$$
satisfies property $(3)$ in the above definition of Calder\'on-Zygmund kernel in the general setting. Let us prove $(1)$ of the definition of Calder\'on-Zygmund type operator. Let $x \not= y$ in $X$ and  $Q(x,y)$ in $\mathcal{D}$ such that $\mu(Q(x,y)) = \delta(x,y)$. On the other hand for any cube strictly smaller than $Q(x,y)$ we must have $h(y)=0$ or $h(x)=0$. Hence from $(h.1)$,  (\ref{norma infinito de h}) and $(d.6)$ we get 
\begin{eqnarray}
|K(x,y)| & \leq &  C\|\eta\|_\infty  \sum_{Q \supseteq Q(x,y)} \sum_{\{h \in \mathcal{H}: Q(h)=Q\}}\frac{1}{\mu(Q)}\nonumber \\
& \leq & C \|\eta\|_\infty M \sum_{Q \supseteq Q(x,y)}\frac{1}{\mu(Q)},\nonumber
\end{eqnarray}
where $M$ is as in $(d.6)$ in Lemma 3.2. Notice that we are considering only the cubes in $\mathcal{\tilde{D}}$. Then if $Q_m$ is the $m-$th ancestor of $Q(x,y)$ in $\mathcal{\tilde{D}}$, the measure of this sequence grows geometrically, i.e. 
\begin{equation}\label{duplicacion crec geometrico}
\mu(Q_m) \geq (1 + \varepsilon)^m \mu(Q(x,y))
\end{equation}
with a geometric constant $\varepsilon > 0$.
Hence 
$$|K(x,y)| \ \leq \ \frac{C  }{\delta(x,y)}$$ 
as desired.
Let us now prove the smoothness properties of $K$. Notice first that, from (\ref{duplicacion crec geometrico}) we get that
\begin{eqnarray}\label{suma geometrica Q}
\sum_{\underset{Q \supseteq Q(x,y)}{Q \in \tilde{\mathcal{D}}}}\ \frac{1}{(\mu(Q))^2} & = & \sum_{m \in \mathbb{N}}\ \frac{1}{(\mu(Q_{m-1}))^2}\nonumber \\
& \leq & \sum_{m \in \mathbb{N}}\ \left(\frac{1}{\left(1 + \varepsilon\right)^2}\right)^{m-1} \frac{1}{(\mu(Q_{0}))^2}\nonumber \\
& = & \frac{1}{(\mu(Q_{0}))^2} \sum_{m \in \mathbb{N}}\ \left(\frac{1}{\left(1 + \varepsilon\right)^2}\right)^{m-1}\nonumber \\
& = & \frac{C}{(\mu(Q_{0}))^2},
\end{eqnarray}
where $Q_0=Q(x,y) $ in $\mathcal{\tilde{D}}$.
On the other hand, notice that for $h \in \mathcal{H}$ if $Q=Q(h) \in  \tilde{\mathcal{D}}$, then 
$$h(x) \ = \ \sum_{Q' \in \mathcal{L}(Q)} \ \beta_{Q'} \chi_{_{Q'}}(x),$$
where $\beta_{Q'} \in \mathbb{R}$. Thus, since the characteristic functions on dyadic cube are Lipschitz functions on $(X,\delta)$, from dyadic doubling property, $ (d.6)$ and \eqref{norma infinito de h}  there exists a positive constant $C$ such that if $x, x' \in X$ we get that
\begin{eqnarray}\label{h en x menos xprima}
|h(x) - h(x')|
& \leq & \sum_{Q' \in \mathcal{L}(Q(h))}\ |\beta_{Q'}|\left|\chi_{_{Q'}}(x) - \chi_{_{Q'}}(x')\right|\nonumber \\
& \leq & \sum_{Q' \in \mathcal{L}(Q(h))}\ \|h\|_{\infty} \left|\chi_{_{Q'}}(x) - \chi_{_{Q'}}(x')\right|\nonumber \\
& \leq & \frac{C}{\sqrt[]{ \mu(Q(h))}}\sum_{Q' \in \mathcal{L}(Q(h))}\ \left|\chi_{_{Q'}}(x) - \chi_{_{Q'}}(x')\right|\nonumber \\
& \leq & \frac{C}{\sqrt[]{ \mu(Q(h))}}\sum_{Q' \in \mathcal{L}(Q(h))}\ \frac{\delta(x,x')}{\mu(Q')}\nonumber \\
& \leq & \frac{C^2}{\sqrt[]{ \mu(Q(h))}}\sum_{Q' \in \mathcal{L}(Q(h))}\ \frac{\delta(x,x')}{\mu(Q(h))}\nonumber \\
& \leq & \frac{C^2 \delta(x,x')}{(\mu(Q(h)))^{\frac{3}{2}}}\#\mathcal{L}(Q(h))\nonumber \\
& \leq & M C^2 \frac{ \delta(x,x')}{(\mu(Q(h)))^{\frac{3}{2}}} \nonumber \\
& \leq & C \frac{ \delta(x,x')}{\mu(Q(h))^{\frac{3}{2}}}.
\end{eqnarray}
Observe now that if $x, y, x' \in X$ satisfy  $2 \delta(x',x) \leq \delta(x,y)$ then $x' \in Q(x,y)$ and moreover
\begin{equation*}
Q(x,y) = Q(x',y).
\end{equation*}
In fact, if $x' \notin Q(x,y)$ then $\delta(x,x') > \delta(x,y)$, which is a contradiction. On the other hand, since $Q(x,y) \in \tilde{\mathcal{D}}$, there exists two different dyadic cubes $Q'$ and $\hat{Q}$ in $\mathcal{L}(Q(x,y))$ such that $y \in Q'$ and $x \in \hat{Q}$. So, if $x' \in X$ satisfies $2 \delta(x',x) \leq \delta(x,y)$ and we suppose that $x' \notin \hat{Q}$, then
$$\delta(x,x')\ =\ \mu(Q(x,y))\ =\ \delta(x,y),$$
which is again a contradiction.
Then  if $2 \delta(x',x) \leq \delta(x,y)$ we have $Q(x,y) = Q(x',y)$, this implies that $\delta(x,y) = \delta(x',y)$. Hence in such case, from the conditions $(a)$ and $(b)$ on $\eta$,  (\ref{h en x menos xprima}), (\ref{suma geometrica Q}) and \eqref{norma infinito de h} we get that
\begin{align*}
\left|\left(\eta(x',h) h(x') - \eta(x,h) h(x)\right) h(y)\right| = &
 %& \leq & \sum_{\underset{Q \supseteq Q(x,y)}{Q \in \tilde{\mathcal{D}}}}\ \ \sum_{\underset{Q(h) = Q}{h \in \mathcal{H}}}\left|\eta(x',h) h(x') - \eta(x,h) h(x)\right| |h(y)|\nonumber \\
\left(\left|\eta(x',h)  - \eta(x,h) \right||h(x')| 
 +  |\eta(x,h)|\left| h(x')-  h(x)\right|\right)|h(y)|\nonumber \\
 \leq &\left(\frac{C B \delta(x,x')}{(\mu(Q(h)))^{3/2}} + B\frac{M C^2 \delta(x,x')}{(\mu(Q(h)))^{3/2}}\right)|h(y)|\nonumber \\
\leq& \left(\frac{C B \delta(x,x')}{(\mu(Q(h)))^{2}} + B\frac{M C^2 \delta(x,x')}{(\mu(Q(h)))^{2}}\right)\nonumber \\
 = & C\frac{ \delta(x,x')}{(\mu(Q(h)))^{2}}. \nonumber \\
%& = & \sum_{\underset{Q \supseteq Q(x,y)}{Q \in \tilde{\mathcal{D}}}}\ \frac{\tilde{C} \delta(x,x')}{(\mu(Q))^{2}}\ \left(\sum_{\underset{Q(h) = Q}{h \in \mathcal{H}}} 1 \right)\nonumber \\
 \end{align*}
 
Then from the above estimate and \eqref{suma geometrica Q} we get that

\begin{align}
|K(x',y) - K(x,y)|  &=  \left|\sum_{h \in \mathcal{H}}\left(\eta(x',h) h(x') - \eta(x,h) h(x)\right) h(y)\right|\nonumber \\
 &=   \left|\sum_{\underset{Q \supseteq Q(x,y)}{Q \in \tilde{\mathcal{D}}}}\ \ \sum_{\underset{Q(h) = Q}{h \in \mathcal{H}}}\left(\eta(x',h) h(x') - \eta(x,h) h(x)\right) h(y)\right|\nonumber \\
%& \leq & \sum_{\underset{Q \supseteq Q(x,y)}{Q \in \tilde{\mathcal{D}}}}\ \ \sum_{\underset{Q(h) = Q}{h \in \mathcal{H}}}\left|\eta(x',h) h(x') - \eta(x,h) h(x)\right| |h(y)|\nonumber \\
 &=  C\sum_{\underset{Q \supseteq Q(x,y)}{Q \in \tilde{\mathcal{D}}}}\ \ \sum_{\underset{Q(h) = Q}{h \in \mathcal{H}}}\frac{ \delta(x,x')}{(\mu(Q(h)))^{2}}\nonumber \\
%& = & \sum_{\underset{Q \supseteq Q(x,y)}{Q \in \tilde{\mathcal{D}}}}\ \frac{\tilde{C} \delta(x,x')}{(\mu(Q))^{2}}\ \left(\sum_{\underset{Q(h) = Q}{h \in \mathcal{H}}} 1 \right)\nonumber \\
 &\leq C \sum_{\underset{Q \supseteq Q(x,y)}{Q \in \tilde{\mathcal{D}}}}\ \frac{ \delta(x,x')}{(\mu(Q))^{2}}\nonumber \\
& =  
  C\frac{\delta(x,x')}{(\mu(Q(x,y)))^2}\nonumber \\
&=  C\frac{\delta(x,x')}{(\delta(x,y))^2},\nonumber
\end{align}
this complete the proof of $(2.a)$.
In a similar way we can prove $(2.b)$.

%Now we prove the property (2.b) of $K$. Similarly as above, if $2 \delta(y',y) \leq \delta(x,y)$, then $Q(x,y) = Q(x,y')$. So in such case, from property (a) of $\eta$, from (\ref{h en x menos xprima}) and from (\ref{suma geometrica Q}) we get that
%\begin{eqnarray}
%|K(x,y') - K(x,y)| & \leq & \sum_{h \in \mathcal{H}}\left|\eta(x,h)\right| |h(x)| \left|h(y') -  h(y)\right|\nonumber \\
%& \leq & \sum_{h \in \mathcal{H}} \frac{C\ \delta(y,y')}{(\mu(Q(h)))^2}\nonumber \\
%& \leq & \frac{ \tilde{C}\ \delta(y,y')}{(\delta(x,y))^2}.\nonumber
%\end{eqnarray}
\end{proof}

\section{Petermichl's type operators in spaces of homogeneous type}

In this section we introduce Petermichl type operators $\mathcal{P}$ on spaces of homogeneous type. We prove, using Theorem \ref{principal teo}, that this operator is a Calder\'on-Zygmund type operator on a suitable  space of homogeneous type and we show that $\mathcal{P}^*$ is almost the identity operator in a sense that shall be made precise.

Let $(X,d,\mu)$ be a space of homogeneous type,  $\mathcal{D}$ a dyadic family, $\mathcal{H}$ a Haar system associated to $\mathcal{D}$ and $\left(\alpha_{h}\right)_{h \in \mathcal{H}}$  a bounded sequence in $\mathbb{R}$. For $f \in L^2(X,\mu)$ we consider the operator  $\mathcal{P}$ defined as

\begin{equation*}
\mathcal{P}f(x) = \sum_{Q \in \tilde{\mathcal{D}}}\ \ \sum_{\underset{Q(h) = Q}{h \in \mathcal{H}}} <f,h> \left(\sum_{\underset{R \in \mathcal{L}(Q)}{\tilde{h} \in \mathcal{H}(R)}} \alpha_{\tilde{h}}\tilde{h}(x) \right)
\end{equation*}
where we recall that $\mathcal{H}(R)$ is given in $(h.2)$.

\begin{proposition}\label{basicas prop de P}
Let $(X,d,\mu)$ be a space of homogeneous type,  $\mathcal{D}$ the dyadic family,  $\mathcal{H}$ the Haar system associated to $\mathcal{D}$ and  $\left(\alpha_{h}\right)_{h \in \mathcal{H}}$  a bounded sequence in $\mathbb{R}$. Then the operator $\mathcal{P}$  satisfies the following properties

%\item
%$\mathcal{P}$ extends to $L^2(X,\mu)$ as a continuous linear operator;
%\item
%for $\varphi$ and $\psi$ simple functions built on disjoint dyadic cubes we have that
%$$<\mathcal{P}(\varphi), \psi>\ =\ \int\int_{X \times X}N(x,y) \varphi(x) \psi(y) d(\mu \times \mu)(x,y),$$
%where
%$$N(x,y) = \sum_{Q \in \tilde{\mathcal{D}}}\ \ \sum_{\underset{Q(h) = Q}{h \in \mathcal{H}}} h(y) \left(\sum_{\underset{R \in \mathcal{L}(Q)}{\tilde{h} \in \mathcal{H}(R)}} \alpha_{\tilde{h}}\tilde{h}(x) \right);$$ 
\item [(1)]

$$\mathcal{P}f(x) = \int_{y \in X} N(x,y) f(y) d\mu (y),$$
where
$\displaystyle N(x,y) = \sum_{Q \in \tilde{\mathcal{D}}}\ \ \sum_{\underset{Q(h) = Q}{h \in \mathcal{H}}} h(y) \left(\sum_{\underset{R \in \mathcal{L}(Q)}{\tilde{h} \in \mathcal{H}(R)}} \alpha_{\tilde{h}}\tilde{h}(x) \right)$ 
and  $f$ is a simple function in $\mathcal{S}(\mathcal{H})$.
\item [(2)]

$$\mathcal{P}^*f(x) = \int_{y \in X} N^*(x,y) f(y) d\mu (y),$$
where $N^*(z,w) = N(w,z)$ and $f$ is a simple function in $\mathcal{S}(\mathcal{H})$.
\item [(3)]
$$\mathcal{P}^*(\mathcal{P}f)(x)=\sum_{h \in \mathcal{H}} C(Q) <f,h> h(x),$$ with  $1 \leq C(Q)\leq  M^2$, with $M$  as  in  $(d.6)$ in  Lemma~3.2.
\end{proposition}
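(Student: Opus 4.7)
For part (1), my plan is to write each $\langle f, h\rangle$ as the integral $\int h(y) f(y)\, d\mu(y)$ and interchange it with the double summation in the definition of $\mathcal{P}f$. Since $f \in \mathcal{S}(\mathcal{H})$ is a finite linear combination of Haar functions, only finitely many $h$ contribute to the outer sum, and by $(d.6)$ and $(h.2)$ the inner sum over $R \in \mathcal{L}(Q)$ and $\tilde h \in \mathcal{H}(R)$ has at most $M(M-1)$ terms; so the interchange is legitimate and $N(x,y)$ is read off at once.

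For part (2), I would use the standard adjoint calculation: from $\langle \mathcal{P}f, g\rangle = \iint N(x,y) f(y) g(x)\, d\mu(y)\, d\mu(x)$ and Fubini I identify $\mathcal{P}^* g(y) = \int N(x,y) g(x)\, d\mu(x)$, and renaming variables yields the formula with $N^*(x,y)=N(y,x)$.

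Part (3) is the heart of the statement. My plan is to regroup: set $H_Q := \sum_{h \in \mathcal{H}(Q)} h$ and $G_Q := \sum_{R \in \mathcal{L}(Q)} \sum_{\tilde h \in \mathcal{H}(R)} \alpha_{\tilde h}\,\tilde h$, so that $\mathcal{P}f = \sum_{Q \in \tilde{\mathcal{D}}} \langle f, H_Q\rangle\, G_Q$ and, by part (2), $\mathcal{P}^* g = \sum_{Q \in \tilde{\mathcal{D}}} \langle g, G_Q\rangle\, H_Q$. The crucial step is the orthogonality relation: for $Q\neq Q'$ in $\tilde{\mathcal{D}}$, each Haar function in the expansion of $G_Q$ is supported on a child of $Q$, and since each $h \in \mathcal{H}$ belongs to a unique $\mathcal{H}(R)$ and hence to a unique parent cube, the expansions of $G_Q$ and $G_{Q'}$ involve disjoint subsets of the orthonormal system $\mathcal{H}$, giving $\langle G_Q, G_{Q'}\rangle = 0$. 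Substituting into $\mathcal{P}^*(\mathcal{P}f)$ kills all cross terms and produces
$$\mathcal{P}^*(\mathcal{P}f)(x) \;=\; \sum_{Q \in \tilde{\mathcal{D}}} \|G_Q\|^2\, \langle f, H_Q\rangle\, H_Q(x),$$
which in the singleton case $\mathcal{H}(Q)=\{h\}$ is exactly the announced formula with $C(Q) = \|G_Q\|^2 = \sum_{R \in \mathcal{L}(Q)} \sum_{\tilde h \in \mathcal{H}(R)} \alpha_{\tilde h}^2$. The bounds $1 \leq C(Q) \leq M^2$ are then a pure counting argument under the natural normalization $|\alpha_{\tilde h}|=1$ (as in the Petermichl operator of Section~2): by $(d.6)$ and $(h.2)$ the number of pairs $(R, \tilde h)$ is at most $M(M-1)\leq M^2$, and at least one since $Q \in \tilde{\mathcal{D}}$ forces some child $R$ of $Q$ to support at least one Haar function. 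The main obstacle I anticipate is verifying the orthogonality of the $G_Q$'s cleanly, which hinges on the one-to-one correspondence between each $h \in \mathcal{H}$ and its supporting cube $Q(h) \in \tilde{\mathcal{D}}$.
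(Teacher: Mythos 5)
Your argument is correct and follows essentially the same route as the paper's: the paper computes the composed kernel $U(x,z)=\int N(y,x)N(y,z)\,d\mu(y)$ and invokes orthonormality of $\mathcal{H}$ exactly where you use $\langle G_Q,G_{Q'}\rangle=0$ for $Q\neq Q'$ (which holds, as you say, because the Haar functions entering $G_Q$ are supported in children of $Q$ and so the index sets are disjoint for distinct $Q$). What is worth stressing is that the two caveats you flag are not defects of your proof but of the statement itself. First, the computation genuinely yields
$$\mathcal{P}^*(\mathcal{P}f)\;=\;\sum_{Q\in\tilde{\mathcal{D}}}\ \|G_Q\|^2\,\langle f,H_Q\rangle\,H_Q,\qquad H_Q=\sum_{h\in\mathcal{H}(Q)}h,$$
and $\langle f,H_Q\rangle H_Q=\sum_{h,h'\in\mathcal{H}(Q)}\langle f,h\rangle\,h'$ contains off-diagonal terms whenever $\#\mathcal{H}(Q)=\#\mathcal{L}(Q)-1>1$; the paper's proof passes from $\sum_{h,h'}h(x)h'(z)$ to $\sum_h h(x)h(z)$ without justification, so item (3) as written is only valid when every $Q\in\tilde{\mathcal{D}}$ carries a single Haar function (as on $\mathbb{R}$). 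Second, $C(Q)=\|G_Q\|^2=\sum_{R\in\mathcal{L}(Q)}\sum_{\tilde h\in\mathcal{H}(R)}\alpha_{\tilde h}^2$, so the bounds $1\le C(Q)\le M^2$ require $|\alpha_{\tilde h}|=1$; for a general bounded sequence one only gets $0\le C(Q)\le M^2\sup_h\alpha_h^2$, and the paper's identity $\sum\alpha_{\tilde h}^2=(\#\mathcal{L}(Q))(\#\mathcal{L}(R)-1)$ assumes this normalization silently. (Even with $|\alpha_{\tilde h}|=1$, your counting argument for the lower bound needs one more hypothesis: $Q\in\tilde{\mathcal{D}}$ guarantees that $Q$ has several children, not that any child itself lies in $\tilde{\mathcal{D}}$ and hence supports a Haar function; if no child does, $G_Q=0$ and $C(Q)=0$.) In short, your proof is sound and, being bookkept more carefully than the paper's, correctly identifies the extra hypotheses under which item (3) holds as stated.
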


\begin{proof}

In order to prove $(1)$, we observe that for $f$ in $\mathcal{S}(\mathcal{H})$ the sum in the definition of $\mathcal{P}f(x)$ is finite and therefore we have that
\begin{eqnarray}
\mathcal{P}f(x) & = & 
\int_{y \in X} \left(\sum_{Q \in \tilde{\mathcal{D}}}\ \ \sum_{\underset{Q(h) = Q}{h \in \mathcal{H}}} h(y) \left(\sum_{\underset{R \in \mathcal{L}(Q)}{\tilde{h} \in \mathcal{H}(R)}} \alpha_{\tilde{h}}\tilde{h}(x) \right)\right) f(y) d\mu (y) \nonumber \\
& = & \int_{y \in X} N(x,y) f(y) d\mu (y), \nonumber
\end{eqnarray}
where
$$N(x,y) = \sum_{Q \in \tilde{\mathcal{D}}}\ \ \sum_{\underset{Q(h) = Q}{h \in \mathcal{H}}} h(y) \left(\sum_{\underset{R \in \mathcal{L}(Q)}{\tilde{h} \in \mathcal{H}(R)}} \alpha_{\tilde{h}}\tilde{h}(x) \right).$$ 
On the other hand, $$\mathcal{P}^*f(z) = \int_{w \in X} N^*(z,w) f(w) d\mu (w),$$
for $N^*(z,w) = N(w,z)$.\\
Finally we compute the action of $\mathcal{P}^*$ on $\mathcal{P}$. By Fubini's theorem we get that

\begin{eqnarray}
\mathcal{P}^*(\mathcal{P}f)(x) & = & \int_{y \in X} N^*(x,y) \mathcal{P}f(y) d\mu (y)\nonumber \\
& = & \int_{y \in X} N^*(x,y) \int_{z \in X} N(y,z) f(z) d\mu (z) d\mu (y)\nonumber \\
& = & \int_{y \in X} N(y,x) \int_{z \in X} N(y,z) f(z) d\mu (z) d\mu (y)\nonumber \\
& = & \int_{z \in X} \left(\int_{y \in X} N(y,x)N(y,z) d\mu (y) \right) f(z) d\mu (z) \nonumber \\
& = & \int_{z \in X} U(x,z) f(z) d\mu (z), \nonumber 
\end{eqnarray}
where
\begin{eqnarray}
U(x,z) & = & \int_{y \in X} N(y,x)N(y,z) d\mu (y)\nonumber \\
& = &  \sum_{Q \in \tilde{\mathcal{D}}} \sum_{\underset{Q(h) = Q}{h \in \mathcal{H}}} \sum_{Q' \in \tilde{\mathcal{D}}} \sum_{\underset{Q(h') = Q}{h' \in \mathcal{H}}} h(x) h'(z) \int_{y \in X} \sum_{\underset{R \in \mathcal{L}(Q)}{\tilde{h} \in \mathcal{H}(R)}} \alpha_{\tilde{h}}\tilde{h}(y) \sum_{\underset{R' \in \mathcal{L}(Q')}{\hat{h} \in \mathcal{H}(R')}} \alpha_{\hat{h}}\hat{h}(y) d\mu (y).\nonumber
\end{eqnarray}
Now, by the orthogonality of the Haar system

\begin{eqnarray}
\int_{y \in X} \sum_{\underset{R \in \mathcal{L}(Q)}{\tilde{h} \in \mathcal{H}(R)}} \alpha_{\tilde{h}}\tilde{h}(y) \sum_{\underset{R' \in \mathcal{L}(Q')}{\hat{h} \in \mathcal{H}(R')}} \alpha_{\hat{h}}\hat{h}(y) d\mu (y) & = &  \sum_{\underset{R \in \mathcal{L}(Q)}{\tilde{h} \in \mathcal{H}(R)}}  \sum_{\underset{R' \in \mathcal{L}(Q')}{\hat{h} \in \mathcal{H}(R')}} \alpha_{\tilde{h}} \alpha_{\hat{h}} \int_{y \in X}\tilde{h}(y) \hat{h}(y) d\mu (y)\nonumber \\
& = & \sum_{\underset{R \in \mathcal{L}(Q)}{\tilde{h} \in \mathcal{H}(R)}} \alpha_{\tilde{h}}^2 \nonumber \\
& = & \left( \#(\mathcal{L}(Q)\right))\left(\#(\mathcal{L}(R) - 1)\right)\nonumber\\
&=& C_{(Q)} \nonumber .
\end{eqnarray}
Therefore

\begin{eqnarray}
U(x,z) & = & \int_{y \in X} N(y,x)N(y,z) d\mu (y)\nonumber \\
& = & \sum_{Q \in \tilde{\mathcal{D}}} \sum_{\underset{Q(h) = Q}{h \in \mathcal{H}}}  C_{(Q)}
h(x) h(z) .\nonumber
\end{eqnarray}
Thus

\begin{equation*}
\mathcal{P}^*(\mathcal{P}f)(x) = \int_{z \in X}\left(\sum_{Q \in \tilde{\mathcal{D}}} \sum_{\underset{Q(h) = Q}{h \in \mathcal{H}}} C_{(Q)} h(x)h(z) \right)f(z)d\mu (z).
\end{equation*}
with
\begin{equation*}
1\ \leq C_{(Q)}=\ \left(\#(\mathcal{L}(Q)\right)\left(\#(\mathcal{L}(R) - 1\right)\  \leq \ M^2
\end{equation*}  as desired.

\end{proof}

As an application of Theorem \ref{principal teo} we obtain the boundedness of these operators in Lebesgue spaces.

\begin{theorem}
Let $(X,d,\mu)$ be a space of homogeneous type. Let $\mathcal{D}$, $\mathcal{H}$ and $\delta$ be a dyadic family, a Haar systems associated to $\mathcal{D}$ and the dyadic metric induced by $\mathcal{D}$ respectively. Let $\left(\alpha_{h}\right)_{h \in \mathcal{H}}$  be a bounded sequence in $\mathbb{R}$. Then the operator 
$$\mathcal{P}f(x) = \sum_{Q \in \tilde{\mathcal{D}}}\ \ \sum_{\underset{Q(h) = Q}{h \in \mathcal{H}}} <f,h> \left(\sum_{\underset{R \in \mathcal{L}(Q)}{\tilde{h} \in \mathcal{H}(R)}} \alpha_{\tilde{h}}\tilde{h}(x) \right)$$
is a Calder\'on-Zygmund type operator on the space $(X,\delta,\mu)$. Hence $\mathcal{P}$ is bounded in $L^p(X)$ $(1 < p < \infty)$ and of weak type $(1,1)$.
\end{theorem}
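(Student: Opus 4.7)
The strategy is to reduce the statement to Theorem~\ref{principal teo} by exhibiting $\mathcal{P}$ as a variable-kernel singular integral $T_\eta$ of the form \eqref{op no convolution}. For each $Q\in\tilde{\mathcal{D}}$ I would set
$$
\Psi_Q(x)=\sum_{\underset{R\in\mathcal{L}(Q)}{\tilde{h}\in\mathcal{H}(R)}}\alpha_{\tilde{h}}\,\tilde{h}(x),
$$
which is supported in $Q$ and, at any point $x$ lying in a given child $R\in\mathcal{L}(Q)$, reduces by \textit{(h.1)} to $\sum_{\tilde h\in\mathcal H(R)}\alpha_{\tilde h}\tilde h(x)$. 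Since \eqref{norma infinito de h} guarantees $|h(x)|\geq C_1\mu(Q(h))^{-1/2}$ on all of $Q(h)$, the quotient
$$
\eta(x,h)=\frac{\Psi_{Q(h)}(x)}{h(x)}\chi_{_{Q(h)}}(x)
$$
is well defined and measurable in $x$. The identity $\eta(x,h)\,h(x)=\Psi_{Q(h)}(x)$ holds on all of $X$ because both sides vanish off $Q(h)$, and reordering the outer sums in the definition of $\mathcal{P}f$ over $h\in\mathcal{H}$ then gives $\mathcal{P}f=T_\eta f$.

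Verifying condition (a) of Theorem~\ref{principal teo} amounts to a size estimate. For $x\in Q(h)$ lying in the unique child $R\in\mathcal{L}(Q(h))$ containing it, the reduction of $\Psi_{Q(h)}$ combined with \textit{(h.2)}, \textit{(d.6)}, \textit{(d.7)} and \eqref{norma infinito de h} bounds $|\Psi_{Q(h)}(x)|$ by a constant multiple of $\|\alpha\|_\infty\,\mu(Q(h))^{-1/2}$ and $|h(x)|^{-1}$ by a constant multiple of $\mu(Q(h))^{1/2}$. The product gives a uniform bound $|\eta(x,h)|\leq B$.

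The main obstacle is the $\delta$-Lipschitz estimate (b). The key observation is that $\eta(\cdot,h)$ is constant on every grandchild of $Q(h)$: inside any $R\in\mathcal{L}(Q(h))$ the function $h$ is constant, while each $\tilde h\in\mathcal H(R)$ is constant on $\mathcal L(R)$, and outside $Q(h)$ the symbol vanishes. Consequently a jump of $\eta(\cdot,h)$ between $x$ and $x'$ can occur only in three situations: (i) $x,x'$ lie in different grandchildren of a common child $R\in\mathcal L(Q(h))$, forcing $R\in\tilde{\mathcal D}$ and $\delta(x,x')=\mu(R)\geq C^{-1}\mu(Q(h))$ by \textit{(d.7)}; (ii) $x,x'$ lie in different children of $Q(h)$, which forces $\delta(x,x')=\mu(Q(h))$; (iii) exactly one of the two points lies outside $Q(h)$, in which case the smallest $\tilde{\mathcal D}$-cube containing both strictly contains $Q(h)$ and the geometric growth estimate \eqref{duplicacion crec geometrico} forces $\delta(x,x')\geq(1+\varepsilon)\mu(Q(h))$. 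In each case the bounded jump $|\eta(x,h)-\eta(x',h)|\leq 2\|\eta\|_\infty$ is controlled by $\delta(x,x')/\mu(Q(h))$ up to a geometric constant depending only on the doubling constant of $\mu$ and the constants in Definitions~\ref{fliady} and \ref{haardyadic}.

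With (a) and (b) in place, Theorem~\ref{principal teo} applies directly and yields that $\mathcal{P}$ is of Calderón--Zygmund type on $(X,\delta,\mu)$, and therefore bounded on $L^p(X)$ for $1<p<\infty$ and of weak type $(1,1)$.
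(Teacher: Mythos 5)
Your reduction to Theorem~\ref{principal teo} is correct and uses the same symbol as the paper: your $\eta(x,h)=\Psi_{Q(h)}(x)/h(x)$ on $Q(h)$ is exactly the paper's $\sum_{R\in\mathcal{L}(Q(h))}\bigl(\sum_{\tilde h\in\mathcal H(R)}\alpha_{\tilde h}h_{R}^{-1}\tilde h(x)\bigr)\chi_{R}(x)$, and your verification of condition (a) is the paper's estimate. Where you genuinely diverge is in condition (b). The paper runs a five-case analysis whose only substantive case ($x$ and $x'$ in distinct cubes of $\mathcal L(R_0)$ for a common child $R_0$ of $Q(h)$) is handled by invoking the pointwise Lipschitz estimate \eqref{h en x menos xprima} for the functions $\tilde h\in\mathcal H(R_0)$ and summing. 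You instead observe that $\eta(\cdot,h)$ is constant on each cube of $\mathcal L(R)$, $R\in\mathcal L(Q(h))$ (by \textit{(h.4)}) and vanishes off $Q(h)$, so that whenever $\eta(x,h)\neq\eta(x',h)$ the ultrametric geometry forces $\delta(x,x')\geq C^{-1}\mu(Q(h))$ — via \textit{(d.7)} when $\delta(x,x')=\mu(R)$, trivially when $\delta(x,x')=\mu(Q(h))$, and via \eqref{duplicacion crec geometrico} when one point leaves $Q(h)$ — after which the crude bound $2\|\eta\|_\infty$ already gives (b). This is a correct and cleaner route: it exposes that for this symbol the Lipschitz condition is a consequence of boundedness plus the structure of $\delta$ (the same phenomenon as the Lipschitz character of $\chi_Q$ proved in Lemma~\ref{normalizacion diadica}), and it never needs the smoothness of the Haar functions themselves. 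The paper's computation yields the same conclusion with an explicit constant in terms of $M$, $C_1$, $C_2$ and $\|\alpha\|_\infty$, but is otherwise no stronger.
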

 
\begin{proof}
By Theorem \ref{principal teo} it is enough  to prove that the operator $\mathcal{P}$ 
%$$\mathcal{P}f(x) = \sum_{Q \in \tilde{\mathcal{D}}}\ \ \sum_{\underset{Q(h) = Q}{h \in \mathcal{H}}} <f,h> \left(\sum_{\underset{R \in \mathcal{L}(Q)}{\tilde{h} \in \mathcal{H}(R)}} \alpha_{\tilde{h}}\tilde{h}(x) \right)$$
can be written as
$$\mathcal{P}f(x)\ = \ \sum_{h \in \mathcal{H}} \eta(x,h)\left<f,h\right> h(x)$$
for some function $\eta: X \times \mathcal{H} \longrightarrow \mathbb{R}$ 
satisfying the hypothesis in Theorem \ref{principal teo}. In fact for $h \in \mathcal{H}$ with $Q = Q(h) \in \mathcal{D}$ we have that 
$$h(x) \ = \ \sum_{R \in \mathcal{L}(Q(h))} h_{_R} \ \chi_{_R}(x),$$
where $h_{R} \in \mathbb{R}$. 
%Recall that (see \eqref{norma infinito de h}), from the Haar system definition,   we get that there exists two positive constant $C_1$ and $C_2$ such that
%\begin{equation}\label{cota por abajo y arriba de h puntual}
%\frac{C_1}{\sqrt[]{ \mu(Q(h))}} \ \leq \ |h(x)|\ \leq \ \frac{C_2}{\sqrt[]{ \mu(Q(h))}} %\end{equation} for all $x \in Q(h)$.
Thus, as $h$ is different from zero on $Q(h)$, we define for $x \in X$, 
$$\eta(x,h) \ = \ \sum_{R \in \mathcal{L}(Q(h))} \left(\sum_{\tilde{h} \in \mathcal{H}(R)} \frac{\alpha_{\tilde{h}}}{h_{_R}}\tilde{h}(x)\right)\chi_{_R}(x),$$
which is a measurable function for $x \in X$.
Then we get that
$$\eta(x,h) h(x) \ = \ \sum_{\underset{R \in \mathcal{L}(Q)}{\tilde{h} \in \mathcal{H}(R)}} \alpha_{\tilde{h}}\tilde{h}(x)$$
and therefore
$$\mathcal{P}f(x)\ = \ \sum_{h \in \mathcal{H}} \eta(x,h)\left<f,h\right> h(x).$$
Let us first prove that the function $\eta$ satisfies condition $(a)$ in the Theorem \ref{principal teo}.  Notice that if $h \in \mathcal{H}$ and $x \notin Q(h)$ then $\eta(x,h) = 0.$ On the other hand if $x \in Q(h)$, from \eqref{norma infinito de h},  doubling property on dyadic cubes $(d.7)$,  $(d.6)$ and  $(h.2)$  we get 
\begin{eqnarray}\label{prueba de a para nuestra eta}
|\eta(x,h)| 
& \leq & \sum_{R \in \mathcal{L}(Q(h))} \sum_{\tilde{h} \in \mathcal{H}(R)} \frac{|\alpha_{\tilde{h}}|}{|h_{_R}|}|\tilde{h}(x)|\left|\chi_{_R}(x)\right|\nonumber \\
& \leq & \sum_{R \in \mathcal{L}(Q(h))} \sum_{\tilde{h} \in \mathcal{H}(R)} \|(\alpha_{\tilde{h}})\|_\infty \frac{\sqrt[]{ \mu(Q(h))}}{C_1} \frac{C_2}{\sqrt[]{ \mu(Q(\tilde{h}))}}\nonumber \\
& \leq & \|(\alpha_{\tilde{h}})\|_\infty \sqrt[]{ C} \frac{C_2}{C_1} \left(\sum_{R \in \mathcal{L}(Q(h))} \sum_{\tilde{h} \in \mathcal{H}(R)} 1\right) \nonumber \\
& \leq & M^2 \|(\alpha_{\tilde{h}})\|_\infty \sqrt[]{ C} \frac{C_2}{C_1}=B,
\end{eqnarray}
where $M$ is as in $(d.6)$ in Lemma 3.2.

\noindent
In order to prove  that the function $\eta$ satisfies $(b)$ in Theorem 4.1, take $h \in \mathcal{H}$ with $Q = Q(h) \in \mathcal{D}$ as above  
$h(x) \ = \ \sum_{R \in \mathcal{L}(Q(h))} h_{_R} \ \chi_{_R}(x).$
We split the proof in five cases.

{\bf{Case 1}.} $x,x' \notin Q(h)$. Then $|\eta(x,h) - \eta(x',h)| = 0.$

{\bf{Case 2}.} $x,x' \in Q'$ for some $Q' \in \mathcal{L}(R_0)$ and some $R_0 \in \mathcal{L}(Q(h))$. Then, since in such case $\tilde{h}(x) = \tilde{h}(x')$ for every $\tilde{h} \in \mathcal{H}(R_0)$, we have that
\begin{eqnarray}
|\eta(x,h) - \eta(x',h)| & = & \left|\sum_{R \in \mathcal{L}(Q(h))} \left[\left(\sum_{\tilde{h} \in \mathcal{H}(R)} \frac{\alpha_{\tilde{h}}}{h_{R}}\tilde{h}(x)\right)\chi_{_R}(x) \ - \  \left(\sum_{\tilde{h} \in \mathcal{H}(R)} \frac{\alpha_{\tilde{h}}}{h_{R}}\tilde{h}(x')\right)\chi_{_R}(x')\right]\right|\nonumber \\
& = & \left| \sum_{\tilde{h} \in \mathcal{H}(R_0)} \left(\frac{\alpha_{\tilde{h}}}{h_{_{R_0}}}\tilde{h}(x) - \frac{\alpha_{\tilde{h}}}{h_{_{R_0}}}\tilde{h}(x')\right)\chi_{_{R_0}}(x)\right|=0\nonumber
\end{eqnarray}

{\bf{Case 3}.} $x \in Q$ and $x' \in Q'$ with $Q,Q' \in \mathcal{L}(R_0)$ and $R_0 \in \mathcal{L}(Q(h))$. Then, from (\ref{h en x menos xprima}),  (\ref{norma infinito de h}), doubling property on dyadic cubes $(d.7)$, $(d.6)$ and $(h.2)$ we get that 
\begin{eqnarray}
|\eta(x,h) - \eta(x',h)| & = & \left|\sum_{R \in \mathcal{L}(Q(h))} \left[\left(\sum_{\tilde{h} \in \mathcal{H}(R)} \frac{\alpha_{\tilde{h}}}{h_{_R}}\tilde{h}(x)\right)\chi_{_R}(x) \ - \  \left(\sum_{\tilde{h} \in \mathcal{H}(R)} \frac{\alpha_{\tilde{h}}}{h_{R}}\tilde{h}(x')\right)\chi_{_R}(x')\right]\right|\nonumber \\
& = & \left|\left(\sum_{\tilde{h} \in \mathcal{H}(R_0)} \frac{\alpha_{\tilde{h}}}{h_{_{R_0}}}\tilde{h}(x)\ -\ \sum_{\tilde{h} \in \mathcal{H}(R_0)} \frac{\alpha_{\tilde{h}}}{h_{_{R_{0}}}}\tilde{h}(x')\right)\chi_{_{R_{0}}}(x)\right|\nonumber \\
& = & \left|\sum_{\tilde{h} \in \mathcal{H}(R_0)} \frac{\alpha_{\tilde{h}}}{h_{R_{0}}}\left(\tilde{h}(x)\ - \tilde{h}(x')\right)\right|\nonumber \\
%& \leq & \sum_{\tilde{h} \in \mathcal{H}(R_0)} \left|\frac{\alpha_{\tilde{h}}}{h_{R_{0}}}\right|\left|\tilde{h}(x)\ - \tilde{h}(x')\right|\nonumber \\
& \leq & \|(\alpha_{\tilde{h}})\|_\infty \frac{1}{|h_{R_{0}}|} M C^2 \delta(x,x') \sum_{\tilde{h} \in \mathcal{H}(R_0)}\frac{1}{(\mu(Q(\tilde{h})))^{3/2}}\nonumber \\
& \leq & \|(\alpha_{\tilde{h}})\|_\infty \frac{(\mu(Q(h)))^{1/2}}{C_1} M C^2 \delta(x,x') \sum_{\tilde{h} \in \mathcal{H}(R_0)}\frac{(\mu(Q(h)))^{3/2}}{(\mu(Q(h)))^{3/2}(\mu(Q(\tilde{h})))^{3/2}}\nonumber \\
& \leq & \|(\alpha_{\tilde{h}})\|_\infty \frac{M C^{5/2}}{C_1}  \frac{\delta(x,x')}{\mu(Q(h))} \left(\sum_{\tilde{h} \in \mathcal{H}(R_0)} 1\right) \nonumber \\
& \leq & \|(\alpha_{\tilde{h}})\|_\infty \frac{M^2 C^{5/2}}{C_1}  \frac{\delta(x,x')}{\mu(Q(h))}.\nonumber
\end{eqnarray}

{\bf{Case 4}.} $x \in Q(h)$ and $x' \notin Q(h)$ then $\eta(x',h) = 0$, also $\delta(x,x') > \mu(Q(h))$.  Hence, from (\ref{prueba de a para nuestra eta}) we obtain that
\begin{eqnarray}
|\eta(x,h) - \eta(x',h)| & = & |\eta(x,h)|\nonumber \\
& \leq & B  \nonumber \\
& \leq & B \frac{\delta(x,x')}{\mu(Q(h))}.\nonumber
\end{eqnarray}

{\bf{Case 5}.} $x \in R_1$ and $x' \in R_2$ with $R_1, R_2 \in \mathcal{L}(Q(h))$different. Then $\delta(x,x') = \mu(Q(h))$ and hence from (\ref{prueba de a para nuestra eta}) we get that
\begin{eqnarray}
|\eta(x,h) - \eta(x',h)| & \leq & |\eta(x,h)| + |\eta(x',h)|\nonumber \\
& \leq & 2  B \nonumber \\
& = & 2  B \frac{\delta(x,x')}{\mu(Q(h))}.\nonumber
\end{eqnarray}
as desired.
%Thus, with $A = min \{2  M^2 \|(\alpha_{\tilde{h}})\|_\infty \sqrt[]{ C} \frac{C_2}{C_1}; \|(\alpha_{\tilde{h}})\|_\infty \frac{M^2 C^{5/2}}{C_1}\}$ we finish the proof. 
\end{proof}

%\section{References}

\end{document}